\documentclass[11pt]{article}
\usepackage{amsfonts}
\usepackage{latexsym}
\usepackage{cite}
\usepackage{amsmath,amsfonts,latexsym,amssymb}
\usepackage[mathscr]{eucal}
\usepackage{cases}
\usepackage{amsthm}

\usepackage[bf,small]{caption2}
\usepackage{float}
\usepackage{graphicx}
\usepackage{amsmath}
\usepackage{amssymb}
\usepackage[all]{xy}

\newcommand\Prefix[3]{\vphantom{#3}#1#2#3}

\newtheorem{theorem}{theorem}[section]
\newtheorem{thm}[theorem]{Theorem}
\newtheorem{lem}[theorem]{Lemma}

\newtheorem{defn}[theorem]{Definition}
\newtheorem{exmp}[theorem]{Example}

\newtheorem{rmk}[theorem]{Remark}
\newtheorem{nota}[theorem]{Notation}

\begin{document}

\title{\textbf{The Dijkgraaf-Witten invariants of Seifert 3-manifolds with orientable bases}}
\author{\Large Haimiao Chen
\footnote{Email: \emph{chenhm@math.pku.edu.cn}. The author is supported by NSFC-11401014.} \\
\normalsize \em{Beijing Technology and Business University, Beijing, China}}
\date{}
\maketitle

\begin{abstract}
  We derive a formula for the Dijkgraaf-Witten invariants of Seifert 3-manifolds with orientable bases.
\end{abstract}

\section{Introduction}

Let $\Gamma$ be a finite group and let $\omega$ be a $U(1)$-valued 3-cocycle on $B\Gamma$.
For a closed oriented 3-manifold $M$, the \emph{Dijkgraaf-Witten invariant} (DW invariant for short) of $M$ is defined as
\begin{align}
Z^{\omega}(M)=\frac{1}{\#\Gamma}\cdot\sum\limits_{\Phi:\pi_{1}(M)\rightarrow\Gamma}\langle F_{\Phi}^{\ast}[\omega],[M]\rangle,  \label{eq:DW}
\end{align}
where $F_{\Phi}:M\rightarrow B\Gamma$ is a map inducing $\Phi$ on the fundamental group, which is determined by $\Phi$ up to homotopy, $[M]$ is the fundamental class of $M$,
and $\langle,\rangle$ is the pairing $H^{3}(M;U(1))\times H_{3}(M;\mathbb{Z})\rightarrow U(1)$.
Clearly $Z^{\omega}(M)$ depends only on the cohomology class $[\omega]$.

DW invariant is the partition function of a 3-dimensional topological quantum field theory, {\it Dijkgraaf-Witten theory}, which was first proposed
by the two authors \cite{DW} naming it and then further developed by Wakui \cite{Wa92}, Freed-Quinn \cite{FQ}, Freed \cite{Fr94}; see also \cite{ETQFT, Wi08} and the references therein. Freed \cite{Fr94} generalized DW theory to arbitrary dimension.

DW invariant encodes information on fundamental group and fundamental class, so it will not be surprising to see some connection between DW invariant and classical invariants. As an evidence, DW invariant was used by Chen\cite{Ch14} to count homotopy classes of maps from a closed manifold to a topological spherical space form with given degree.

Till now, there are few computations seen in the literature. In dimension 2, Turaev \cite{Tu07} derived the formula for the DW
invariants of closed surfaces using projective representations of finite groups, and Khoi \cite{Kh11} extended the result to
surfaces with boundary. In dimension 3,
Chen \cite{Ch12} gave a formula for the DW invariants of Seifert 3-manifolds when $\omega$ is trivial; Khoi \cite{Kh14} computed the general DW invariants for circle boundles; Matveev and Turaev \cite{MT15} expressed the DW invariant over $\mathbb{Z}_{2}$ via Arf invariant.
As another contribution, in this article we compute the general DW invariants of Seifert 3-manifolds with orientable bases.

The content is organized as follows.
In Section 2 we give an exposition of DW theory, reviewing the construction of \cite{Fr94} in dimension 3. We adopt algebriac and technical notions, instead of geometric and conceptual ones used in \cite{Fr94}; the aim is to make computations easier to get started, and to lay the foundation for deriving more formulas for DW invariants.
Section 3 is devoted to computing the DW invariants of Seifert 3-manifolds with orientable bases;
the result is expressed in terms of irreducible characters of the ``twisted quantum double" of $\Gamma$, as called in the literature.
We go along the same line as in \cite{Ch12}. However, it turns out that some finer structures should be taken care of, and actually this constitutes the main difficulty.

\section{Exposition of Dijkgraaf-Witten theory}

\subsection{Preparation}

The 3-cocycle $\omega$ can be identified with a function $\Gamma^{3}\to U(1)$ such that
\begin{align*}
\omega(y,z,w)\omega(xy,z,w)^{-1}\omega(x,yz,w)\omega(x,y,zw)^{-1}\omega(x,y,z)=1
\end{align*}
for all $x,y,z,w\in\Gamma$.
We assume that it is normalized, i.e., $\omega(x,y,z)=1$ if $e\in\{x, y, z\}$.
Recall that (see \cite{AT} Page 89) $B\Gamma$ has a simplicial model, in which a $k$-simplex is an ordered $k$-tuple $[x_{1}|\cdots|x_{k}]$ with $x_{i}\in\Gamma$, and
\begin{align*}
\partial[x_{1}|\cdots|x_{k}]=[x_{2}|\cdots|x_{k}]+\sum\limits_{i=1}^{k-1}(-1)^{i}[x_{1}|\cdots|x_{i}x_{i+1}|\cdots x_{k}]+ [x_{1}|\cdots|x_{k-1}].
\end{align*}
The value of $\omega$ taking at the 3-simplex $[x|y|z]$ is $\omega(x,y,z)$.

All manifolds are assumed to be compact and oriented. For a manifold $M$, let $-M$ denote the manifold obtained by reversing the
orientation of $M$. All diffeomorphisms are assumed to preserve orientation.

For a topological space $X$, let $\Pi_{1}(X)$ denote the fundamental groupoid of $X$; if $\gamma$ is a path in $X$, then denote its homotopy class
(fixing the endpoints) also by $\gamma$.
Let $\mathfrak{B}(X)=\mathcal{F}(\Pi_{1}(X),\Gamma)$, the functor category, (viewing $\Gamma$ as a groupoid with a single object); note that a morphism $\lambda:\phi\rightarrow\phi'$ is a map $\lambda:X\rightarrow\Gamma$ such that
$$\phi'(\gamma)=\lambda(\gamma(0))^{-1}\phi(\gamma)\lambda(\gamma(1))$$
for each path $\gamma:[0,1]\rightarrow X$.
If $\pi_{1}(X)$ is finitely generated, then $\mathfrak{B}(X)$ has finitely many connected components.

Let $\Delta^{k}=[v_{0},\ldots,v_{k}]$ denote the standard $k$-simplex. For each singular $k$-simplex
$\sigma:\Delta^{k}\to X$, let $\sigma_{i}=\sigma(v_{i})$ and let $\sigma_{ij}=\sigma|_{[v_{i},v_{j}]}:[v_{i},v_{j}]\to X.$

Let $\lambda:\phi\rightarrow\phi'$ be a morphism in $\mathfrak{B}(X)$. Given $\sigma:\Delta^{k}\to X$, put
\begin{align}
\sigma\langle\phi\rangle&=[\phi(\sigma_{01})|\cdots|\phi(\sigma_{k-1,k})]\in C_{k}(B\Gamma;\mathbb{Z}), \\
\sigma\langle\lambda\rangle&=\sum\limits_{j=0}^{k} (-1)^{k-j}
[\phi(\sigma_{01})|\cdots|\phi(\sigma_{j-1,j})|\lambda(\sigma_{j})|\phi'(\sigma_{j,j+1})|\cdots|\phi'(\sigma_{k-1,k})] \nonumber \\
&\in C_{k+1}(B\Gamma;\mathbb{Z});
\end{align}
given $\xi=\sum\limits_{i}n_{i}\sigma^{i}\in C_{k}(X;\mathbb{Z})$, put
\begin{align}
\xi\langle\phi\rangle&=\sum\limits_{i}n_{i}\cdot\sigma^{i}\langle\phi\rangle\in C_{k}(B\Gamma;\mathbb{Z}), \\
\xi\langle\lambda\rangle&=\sum\limits_{i}n_{i}\cdot\sigma^{i}\langle\lambda\rangle\in C_{k+1}(B\Gamma;\mathbb{Z}).
\end{align}

\subsection{The DW invariant of a closed surface}

For the ``empty surface" $\emptyset$, set $Z^{\omega}(\emptyset)=\mathbb{C}$, equipped with the standard inner product.

Suppose $N$ is a nonempty closed surface.
Given $\phi\in\mathfrak{B}(N)$ and $\tau\in C_{2}(N;\mathbb{Z})$ homologous to zero, choose
$\varsigma\in C_{3}(N;\mathbb{Z})$ with $\partial\varsigma=\tau$, and set
\begin{align}
\omega(\phi;\tau)=\omega(\varsigma\langle\phi\rangle).  \label{eq:df}
\end{align}
This is independent of the choice of $\varsigma$: if also $\partial\varsigma'=\tau$, then $\varsigma'-\varsigma=\partial\nu$ for some $\nu\in C_{4}(N;\mathbb{Z})$, for $H_{3}(N;\mathbb{Z})=0$, hence
$$\frac{\omega(\varsigma'\langle\phi\rangle)}{\omega(\varsigma\langle\phi\rangle)}=
\omega(\partial\nu\langle\phi\rangle)=(\delta\omega)(\nu\langle\phi\rangle)=1.$$

When $N$ is connected, the fundamental class $[N]$ is the set of singular 2-cycles representing $[N]$. When $N$ has connected components $N_{1},\ldots,N_{r}$, let $[N]=[N_{1}]\times\cdots\times[N_{r}]$; note that
$\mathfrak{B}(N)\cong\mathfrak{B}(N_{1})\times\cdots\times\mathfrak{B}(N_{r})$.

\begin{defn}
\rm The DW invariant $Z^{\omega}(N)$ is the Hermitian space, whose underlying vector space consists of functions
$\vartheta:[N]\times\mathfrak{B}(N)\to\mathbb{C}$
such that
\begin{align}
\vartheta(\xi',\phi')=\omega(\xi'\langle\lambda\rangle)\omega(\phi;\xi'-\xi)\cdot\vartheta(\xi,\phi) \label{eq:condition}
\end{align}
for any $\xi,\xi'$ and any $\lambda:\phi\rightarrow\phi'$,
and whose inner product is given by
\begin{align}
(\vartheta_{1},\vartheta_{2})=\sum\limits_{[\phi]\in\pi_{0}(\mathfrak{B}(N))}
\frac{1}{\#\textrm{Aut}(\phi)}\cdot\vartheta_{1}(\xi,\phi)\overline{\vartheta_{2}(\xi,\phi)},  \label{eq:inner}
\end{align}
where $\xi\in[N]$ and $\phi\in[\phi]$ are taken arbitrarily.
\end{defn}

\begin{rmk} \label{rmk:key}
\rm Suppose $S$ is a subset of $N$ which intersects each connected component of $N$.
Let $\Pi_{1,S}(N)$ be the full subcategory of $\Pi_{1}(N)$
with object set $S$, let $[N]_{S}$ be the subset of $[N]$ consists of $\sum\limits_{i}n_{i}\sigma^{i}$ with $\sigma^{i}_{0}, \sigma^{i}_{1}, \sigma^{i}_{2}\in S$, and let
$$\mathfrak{B}_{S}(N)=\mathcal{F}(\Pi_{1,S}(N),\Gamma).$$
Define $Z^{\omega}_{S}(N)$ similarly as above, with $[N]$, $\mathfrak{B}(N)$ replaced by $[N]_{S}$, $\mathfrak{B}_{S}(N)$, respectively. Then there is a canonical isometry
$$Z^{\omega}_{S}(N)\to Z^{\omega}(N), \ \ \ \ \ \varphi\mapsto\check{\varphi},$$
with $\check{\varphi}$ determined by $\check{\varphi}(\xi,\phi)=\varphi(\xi,\phi_{S})$, for any $\xi\in[N]_{S}$ and any $\phi\in\mathfrak{B}(N)$, where $\phi_{S}$ is the pull-back under $\Pi_{1,S}(N)\to\Pi_{1}(N)$.

For practical computation, it is convenient to choose an appropriate $S$ (usually a finite set) and identify $Z^{\omega}(N)$ with $Z^{\omega}_{S}(N)$.
\end{rmk}

\subsection{The DW invariant of a 3-manifold}

Let $M$ is a 3-manifold.
If $\partial M=\emptyset$, then $Z^\omega(M)\in\mathbb{C}$ has been defined as (\ref{eq:DW}).
Suppose $M=M_1\sqcup M_2$ with $\partial M_1=\emptyset$ and each connected component of $M_2$ has nonempty boundary, so that $\partial M=\partial M_2$.

Given $\xi\in[\partial M]$ and $\phi\in\mathfrak{B}(\partial M)$,
take $\varsigma\in [M_2,\partial M]$ with $\partial\varsigma=\xi$, and 
let $\mathfrak{B}_{\phi}(M_2)$ be the subcategory of $\mathfrak{B}(M_2)$ whose objects are the functors $\Phi:\Pi_{1}(M_2)\rightarrow\Gamma$
such that the composite $\Pi_{1}(\partial M)\to\Pi_{1}(M_2)\to\Gamma$ is equal to $\phi$, and whose morphisms are maps $\lambda:M_2\rightarrow\Gamma$ with $\lambda|_{\partial M}\equiv e$.
For $\Phi\in\mathfrak{B}_{\phi}(M_2)$, due to the reason similarly as in the previous subsection,
$\omega(\varsigma\langle\Phi\rangle)$ is independent of the choice of $\varsigma$.
Furthermore, if $\lambda:\Phi\rightarrow\Phi'$ is a morphism in $\mathfrak{B}_{\phi}(M_2)$, then
$$\frac{\omega(\varsigma\langle\Phi'\rangle)}{\omega(\varsigma\langle\Phi\rangle)}=
\omega(\partial\varsigma\langle\lambda\rangle)=(\delta\omega)(\varsigma\langle\lambda\rangle)=1.$$
Thus $\omega(\varsigma\langle\Phi\rangle)$ depends only on $\xi$ and the connected component containing $\Phi$;
denote it by $\omega(\xi,[\Phi])$.

\begin{defn}
\rm The DW invariant $Z^{\omega}(M)$ is the function
\begin{align}
Z^{\omega}(M)&: [\partial M]\times\mathfrak{B}(\partial M)\to\mathbb{C}, \\
(\xi,\phi)&\mapsto Z^\omega(M_1)\cdot\sum\limits_{[\Phi]\in\pi_{0}(\mathfrak{B}_{\phi}(M_2))}\omega(\xi,[\Phi]).
\end{align}
\end{defn}

\subsection{The TQFT axioms}

Assertion 4.12 of \cite{Fr94} can be restated as
\begin{thm}
To each closed surface $N$ is assigned a Hermitian space $Z^{\omega}(N)$,
and to each 3-manifold $M$ is assigned a vector $Z^{\omega}(M)\in Z^{\omega}(\partial M)$.
They satisfy the following:

{\rm(a)} (Functorality) Each diffeomorphism $f:N\rightarrow N'$ induces an isometry
$$f_{\ast}:Z^{\omega}(N)\to Z^{\omega}(N').$$
For each diffeomorphism $F:M\rightarrow M'$, one has
$(\partial F)_{\ast}(Z^{\omega}(M))=Z^{\omega}(M').$

{\rm(b)} (Orientation) There is a canonical isometry
$$Z^{\omega}(-N)\cong \overline{Z^{\omega}(N)},$$
through which $Z^{\omega}(-M)$ is sent to $Z^{\omega}(M)$.

Here $\overline{Z^{\omega}(N)}$ is obtained from re-equipping $Z^{\omega}(N)$ with the scalar product $(\lambda,v)\mapsto\overline{\lambda}v$ and the inner product
$(u,v)_{\overline{Z^{\omega}(N)}}=(v,u)_{Z^{\omega}(N)}.$

{\rm(c)} (Multiplicativity)  There is a canonical isometry
$$Z^{\omega}(N\sqcup N')\cong Z^{\omega}(N)\otimes Z^{\omega}(N'),$$
through which $Z^{\omega}(M\sqcup M')$ is sent to $Z^{\omega}(M)\otimes Z^{\omega}(M')$.

{\rm(d)} (Gluing) Let $M$ be a 3-manifold with $\partial M=-N\sqcup N'\sqcup N_0$, and let $\check{M}$ be the 3-manifold obtained from gluing $M$ via a diffeomorphism $f:N\rightarrow N'$, i.e., $\check{M}=M/x\sim f(x).$
Then $Z^{\omega}(M)$ is sent to $Z^{\omega}(\check{M})$ by the composite
\begin{align*}
Z^{\omega}(\partial M)\cong\overline{Z^{\omega}(N)}\otimes Z^{\omega}(N')\otimes Z^{\omega}(N_0)\rightarrow Z^{\omega}(N_0)=Z^{\omega}(\partial\check{M}),
\end{align*}
where the middle map uses the pairing
$$\overline{Z^{\omega}(N)}\otimes Z^{\omega}(N')\to\mathbb{C},  \ \ \
u\otimes v\mapsto(v,f_{\ast}(u))_{Z^{\omega}(N')}.$$
\end{thm}

\begin{rmk}
\rm It is worth describing the isometry $f_{\ast}$ in (a) explicitly. Let $f_{\#}:[N]\rightarrow [N']$ be the induced map on singular chains, and $f^{\ast}:\mathfrak{B}(N')\rightarrow\mathfrak{B}(N)$ the pullback.
For $\vartheta\in Z^{\omega}(N)$,  the function $f_{\ast}(\vartheta):[N']\times\mathfrak{B}(N')\to\mathbb{C}$ takes the form
\begin{align}
(\xi',\phi')\mapsto\vartheta(f_{\#}^{-1}(\xi'),f^{\ast}(\phi'))=
\omega(f^{\ast}(\phi');f_{\#}^{-1}(\xi')-\xi')\cdot\vartheta(\xi',f^{\ast}(\phi')), \label{eq:diff}
\end{align}
the equality following from (\ref{eq:condition}).
\end{rmk}

\begin{rmk}
\rm If $\partial M=-N\sqcup N'$, i.e., $M$ is a cobordism from $N$ to $N'$, then
$Z^{\omega}(M)$ can be identified with a linear map $Z^{\omega}(N)\to Z^{\omega}(N')$ through
$$Z^{\omega}(\partial M)\cong Z^{\omega}(-N)\otimes Z^{\omega}(N')\cong\overline{Z^{\omega}(N)}\otimes Z^{\omega}(N')\cong\hom(Z^{\omega}(N),Z^{\omega}(N')).$$
Explicitly, for each $\theta\in Z^\omega(N)$ and $(\xi',\phi')\in[N']\times\mathcal{B}(N')$,
$$Z^\omega(M)(\theta): (\xi',\phi')\mapsto\sum\limits_{[\phi]\in\pi_0(\mathfrak{B}(N))}
\frac{(\#\Gamma)^{\#\pi_0(N)}}{\#{\rm Aut}(\phi)}I(\theta;(\xi,\phi),(\xi',\phi')),$$
where $\xi$ is an arbitrarily chosen element in $[N]$, and
$$I(\theta;(\xi,\phi),(\xi',\phi'))=
\sum\limits_{[\Phi]\in\pi_0(\mathfrak{B}_{\phi\sqcup\phi'}(M))}
\theta(\xi,\phi)\omega(\xi'-\xi,[\Phi]).$$

According to the gluing axiom, if $M'$ is another cobordism from $N'$ to $N''$ and we glue $M$ with $M'$ via a diffeomorphism $f:N'\to N'$, then the linear map $Z^{\omega}(M\cup_{f}M'):Z^{\omega}(N)\to Z^{\omega}(N'')$ is equal to
$Z^{\omega}(M')\circ f_{\ast}\circ Z^{\omega}(M)$.
\end{rmk}

\section{Computing the DW invariants of Seifert 3-manifolds}

\subsection{The Hermitian vector space $Z^{\omega}(\Sigma_{1})$} \label{sec:E}

\begin{nota}
\rm For $h,h',x,x'\in\Gamma$, let
\begin{align}
\gamma^{\omega}_{x}(h,h')&=\omega(h,h',x)\omega(x,x^{-1}hx,x^{-1}h'x)\omega(h,x,x^{-1}h'x)^{-1},  \\
\theta^{\omega}_{x}(h,h')&=\omega(x,h,h')\omega(h,h',(hh')^{-1}xhh')\omega(h,h^{-1}xh,h')^{-1}.   \label{eq:theta}
\end{align}
\end{nota}

\begin{nota}
\rm Let $\pi:\mathbb{R}^{2}\rightarrow S^{1}\times S^{1}=\Sigma_{1}$ be the universal covering.
For $A_{i}\in\mathbb{Z}^{2}\subset\mathbb{R}^{2}, i=0,1,2$, use $[A_{0}A_{1}A_{2}]$ to denote the singular 2-simplex
$\Delta^{2}\stackrel{\tilde{\sigma}}\rightarrow\mathbb{R}^{2}\stackrel{\pi}\rightarrow\Sigma_{1},$
where $\tilde{\sigma}$ is the map extending $v_{i}\mapsto A_{i}$ linearly.

Similarly for singular 3-simplices.
\end{nota}

In the notation of Remark \ref{rmk:key}, let $S=\{1\times 1\}\subset\Sigma_{1}$.
Let
\begin{align}
\xi_{0}&=[OW_{1}W_{3}]-[OW_{2}W_{3}]\in [\Sigma_{1}]_{S}, \\
\text{with \ \ \ \ } O&=(0,0), \ \ \ W_{1}=(0,1), \ \ \ W_{2}=(1,0), \ \ \ W_{3}=(1,1).
\end{align}
An object of $\mathfrak{B}_{S}(\Sigma_{1})$ is a homomorphism
$\phi:\pi_{1}(\Sigma_{1},1\times 1)\rightarrow\Gamma$, which is determined by its values at ${\rm mer}=S^{1}\times 1$ and ${\rm lon}=1\times S^{1}$;
write $\phi$ as $\phi_{x,h}$ if $\phi({\rm mer})=x$ and $\phi({\rm lon})=h$.
A morphism $\phi_{x_{1},h_{1}}\to\phi_{x_{2},h_{2}}$ is an element $u$ such that $\Prefix^{u}{x_{1}}=x_{2}$ and $\Prefix^{u}{h_{1}}=h_{2}$.

The vector space $E=Z^{\omega}_{S}(\Sigma_{1})$ consists of functions
$$\varphi:\{(x,h)\in\Gamma^{2}\colon xh=hx\}\rightarrow\mathbb{C}$$
such that
\begin{align}
\varphi(u^{-1}xu,u^{-1}hu)=\frac{\gamma^{\omega}_{u}(x,h)}{\gamma^{\omega}_{u}(h,x)}\cdot\varphi(x,h) \ \ \ \ \
\text{for\ all\ \ } u\in\Gamma,
\end{align}
and is equipped with the inner product
\begin{align}
(\varphi,\varphi')_{E}=\frac{1}{\#\Gamma}\cdot\sum\limits_{x,h\colon xh=hx}\varphi(x,h)\overline{\varphi'(x,h)}.
\end{align}
By Remark \ref{rmk:key}, there exists a canonical isometry
\begin{align}
E\cong Z^{\omega}(\Sigma_{1}), \ \ \ \ \ \varphi\mapsto\vartheta \ \ \text{with\ \ }
\vartheta(\xi_{0},\phi)=\varphi(\phi({\rm mer}),\phi({\rm lon})).  \label{eq:iso}
\end{align}

Let $\mathcal{C}(\Gamma)$ be a complete set of representatives for conjugacy classes of $\Gamma$.
For each $z\in\mathcal{C}(\Gamma)$, let $N_{z}$ be the centralizer of $z$ in $\Gamma$, then
$$\theta^{\omega}_{z}:N_{z}^{2}\to U(1), \hspace{10mm} (h,h')\mapsto \theta^{\omega}_{z}(h,h')$$
is a 2-cocycle;
let $\mathcal{R}^{\omega}_{z}$ denote the set of irreducible $\theta^{\omega}_{z}$-projective representations of $N_{z}$.
For each $\rho\in\mathcal{R}^{\omega}_{z}$, let $\chi_{\rho}\in E$ be the unique function with
\begin{align}
\chi_{\rho}(x,h)=\delta_{x,z}\cdot{\rm tr}(\rho(h)) \ \ \ \ \ \text{for\ all\ \ } x\in\mathcal{C}(\Gamma).
\end{align}

By Theorem 23 of \cite{Wi08},  a canonical orthonormal basis for $E$ is given by
\begin{align}
\{\chi_{\rho}\colon \rho\in\Lambda^{\omega}\}, \ \ \ \ \ \text{with\ \ } \Lambda^{\omega}=\bigsqcup_{z\in\mathcal{C}(\Gamma)}\mathcal{R}^{\omega}_{z}.
\end{align}

It is well-known that the mapping class group of $\Sigma_{1}$ is isomorphic to $\textrm{SL}(2,\mathbb{Z})$,
which is generated by
\begin{align}
q=\left(\begin{array}{ll} 0 & 1 \\ -1 & 0 \\ \end{array}\right) \ \ \ \ \ \text{and\ \ \ \ \ }
t=\left(\begin{array}{ll} 1 & 0 \\ 1 & 1 \\ \end{array}\right).
\end{align}
If $f=\left(\begin{array}{cc} a & b \\ c & d \\ \end{array}\right)$, by which we mean that the diffeomorphism $f:\Sigma_{1}\to\Sigma_{1}$ represents $\left(\begin{array}{cc} a & b \\ c & d \\ \end{array}\right)$, then it is induced by the mapping
$\mathbb{R}^2\to\mathbb{R}^2$ which sends $(v,w)$ to $(v,w)\left(\begin{array}{cc} a & b \\ c & d \\ \end{array}\right)$,
hence
\begin{align}
f^{\ast}(\phi_{x,h})=\phi_{x^{a}h^{b},x^{c}h^{d}}, \label{eq:action2}
\end{align}
and it follows from (\ref{eq:diff}) and (\ref{eq:iso}) that for each $\varphi\in E$,
\begin{align}
(f_{\ast}(\varphi))(x,h)=\omega(\phi_{x^{a}h^{b},x^{c}h^{d}}; f_{\#}^{-1}(\xi_{0})-\xi_{0})\cdot\varphi(x^{a}h^{b},x^{c}h^{d}). \label{eq:action3}
\end{align}

Let $P$ be the pair of pants, 
then $P\times S^{1}$ is a cobordism from $\Sigma_{1}\sqcup\Sigma_{1}$ to $\Sigma_{1}$. The linear map
${\rm mul}=Z^{\omega}(P\times S^{1}): E\otimes E\rightarrow E$ takes the form
\begin{align}
({\rm mul}(\varphi_{1}\otimes\varphi_{2}))(x,h)
=\sum\limits_{x_{1}x_{2}=x}\gamma^{\omega}_{h}(x_{1},x_{2})\varphi_{1}(x_{1},h)\varphi_{2}(x_{2},h).
\end{align}
Furthermore, actually it can be diagonalized:
\begin{align}
\textrm{mul}(\psi_{\rho}\otimes \psi_{\rho'})&=\delta_{\rho,\rho'}\cdot D_{\rho}\psi_{\rho}, \\
\text{with\ \ \ \ } \psi_{\rho}=q_{\ast}(\chi_{\rho}), \ \ \ \  D_{\rho}&=\frac{\# N_{z}}{\chi_{\rho}(z,e)} \ \text{for\ } \rho\in\mathcal{R}^{\omega}_{z}.
\end{align}
By dualizing, ${\rm com}=Z(-P\times S^{1}):E\to E\otimes E$ can be expressed as
\begin{align}
{\rm com}(\psi_{\rho})=D_{\rho}\cdot\psi_{\rho}\otimes\psi_{\rho}.
\end{align}

\subsection{Cycle-cocycle calculus}

Let $\mathbb{Z}_{m}=\mathbb{Z}/m\mathbb{Z}$, regarded as a quotient ring of $\mathbb{Z}$ when necessary. For $a\in\mathbb{Z}$, denote its image under $\mathbb{Z}\to\mathbb{Z}_{m}$ also by $a$.
Let
\begin{align}
\mathbb{Z}_{m}\rightarrow\{0,1,\cdots,m-1\}, \hspace{10mm} x\mapsto\tilde{x}
\end{align}
be the obvious bijection.

By Proposition 2.3 of \cite{HLY12}, a complete set of representatives of elements of
$H^{3}(B\mathbb{Z}_{m};U(1))\cong\mathbb{Z}_{m}$ can be given by
\begin{align}
\mu_{\ell}:\mathbb{Z}_{m}^{3}\rightarrow U(1), \hspace{5mm} (x,y,z)\mapsto\zeta_{m}^{\widetilde{\ell x}\lfloor\frac{\tilde{y}+\tilde{z}}{m}\rfloor},
\hspace{5mm} \ell\in\mathbb{Z}_{m},
\end{align}
where $\zeta_{m}=e^{2\pi\sqrt{-1}/m}$ and $\lfloor\ \rfloor$ is the floor function.
Observe that
\begin{align}
\theta^{\ell}_{h}(x,y):=\theta^{\mu_{\ell}}_{h}(x,y)=\mu_{\ell}(h,x,y)=\zeta_{m}^{\widetilde{\ell h}\lfloor\frac{\tilde{x}+\tilde{y}}{m}\rfloor}
=\zeta_{m^{2}}^{\widetilde{\ell h}(\tilde{x}+\tilde{y}-\widetilde{x+y})}.
\label{eq:theta2}
\end{align}

\begin{nota}
\rm For each $z\in\Gamma$, let $m_{z}$ be its order, and let $\ell_{z}\in\mathbb{Z}_{m_{z}}$ be the image of $[\omega]$ under the pull-back
$H^{3}(B\Gamma;U(1))\stackrel{\iota^{\ast}_{z}}\longrightarrow H^{3}(B\langle z\rangle;U(1))\cong\mathbb{Z}_{m_{z}}$
induced by the inclusion $\iota_{z}:\langle z\rangle\hookrightarrow\Gamma$.
Then the function
\begin{align}
\omega_{z}:\mathbb{Z}_{m_{z}}^{3}\to U(1), \hspace{10mm} (a,b,c)\to \omega(z^{a},z^{b},z^{c})
\end{align}
is cohomologous to $\mu_{\ell_{z}}$; take $\beta_{z}:\mathbb{Z}_{m_{z}}^{2}\to U(1)$ such that
$\omega_{z}=\mu_{\ell_{z}}\cdot\delta\beta_{z}.$
\end{nota}

Put
\begin{align}
\epsilon_{z}(a,b)=\frac{\beta_{z}(a,b)}{\beta_{z}(b,a)}.
\end{align}
Observe that (using the notations (\ref{eq:theta}) and (\ref{eq:theta2}))
\begin{align}
\frac{\theta_{a}^{\omega_{z}}(b,c)}{\theta_{a}^{\ell_{z}}(b,c)}&=\frac{(\delta\beta)(a,b,c)\cdot(\delta\beta)(b,c,a)}{(\delta\beta)(b,a,c)}
\nonumber \\
&=\frac{\beta(b,c)\beta(a,b+c)}{\beta(a+b,c)\beta(a,b)}\cdot\frac{\beta(c,a)\beta(b,c+a)}{\beta(b+c,a)\beta(b,c)}
\cdot\frac{\beta(b+a,c)\beta(b,a)}{\beta(a,c)\beta(b,a+c)}  \nonumber \\
&=\frac{\epsilon_{z}(a,b+c)}{\epsilon_{z}(a,b)\epsilon_{z}(a,c)}.  \label{eq:theta-epsilon}
\end{align}

Denote the residue of $b$ modulo $m_{z}$ by $\tilde{b}$.

When $b\not\equiv 0\pmod{m_{z}}$, applying (\ref{eq:theta-epsilon}) repeatedly, we obtain
\begin{align}
\epsilon_z(a,b)=\epsilon_z(a,\tilde{b})=\epsilon_z(a,1)^{\tilde{b}}\cdot\prod\limits_{j=0}^{\tilde{b}-1}\theta_{a}^{\omega_{z}}(j,1)
=\epsilon_z(a,1)^{\tilde{b}}\cdot\prod\limits_{j=0}^{\tilde{b}-1}\theta_{a}^{\omega_{z}}(j,1),    \label{eq:epsilon}
\end{align}
noting $\theta_{a}^{\ell_{z}}(j,1)=1$ for $0\le j\le\tilde{b}-1<m_{z}-1$; when $b\equiv 0\pmod{m_{z}}$, (\ref{eq:epsilon}) holds trivially.
As a special case of (\ref{eq:epsilon}),
$$\epsilon_z(1,a)=\prod\limits_{j=0}^{\tilde{a}-1}\theta_{1}^{\omega_{z}}(j,1)=\prod\limits_{j=0}^{\tilde{a}-1}\omega_{z}(1,j,1),$$
thus actually
\begin{align}
\epsilon_z(a,b)=\prod\limits_{j=0}^{\tilde{a}-1}\omega_{z}(1,j,1)^{-\tilde{b}}\cdot
\prod\limits_{j=0}^{\tilde{b}-1}\theta_{a}^{\omega_{z}}(j,1).
\end{align}

Put
\begin{align}
\kappa^{\omega}_{a,b}(z)=\zeta_{m_{z}^{2}}^{\widetilde{\ell_{z}}(-b\tilde{a}-a(\widetilde{-b}))}\epsilon_{z}(a,-b).
\end{align}

The following formula is the main achievement of this article:

\begin{lem} \label{lem:key}
If $f=\left(\begin{array}{ll} a & b \\ c & d \\ \end{array}\right)$, then
\begin{align}
\omega(\phi_{e,z};f^{-1}_{\#}(\xi_{0})-\xi_{0})=\kappa^{\omega}_{a,b}(z).
\label{eq:lem}
\end{align}
\end{lem}

\begin{proof}
The result is trivial when $f$ is the identity. We prove the lemma by showing that if it is true for $f$, then it is also true for $fq=\left(\begin{array}{cc} -b & a \\ -d & c \\ \end{array}\right)$,
$fq^{-1}=\left(\begin{array}{cc} b & -a \\ d & -c \\ \end{array}\right)$,
$ft=\left(\begin{array}{cc} a+b & b \\ c+d & d \\ \end{array}\right)$ and
$ft^{-1}q=\left(\begin{array}{cc} -b & a-b \\ -d & c-d \\ \end{array}\right)$.

Suppose (\ref{eq:lem}) holds for $f$.
Abbreviate $\zeta_{m_{z}^{2}}^{\widetilde{\ell_{z}}}$, $\beta_{z}$, $\epsilon_{z}$ to $\zeta, \beta,\epsilon$, respectively.

\begin{figure} [h]
  \centering
  \includegraphics[width=0.6\textwidth]{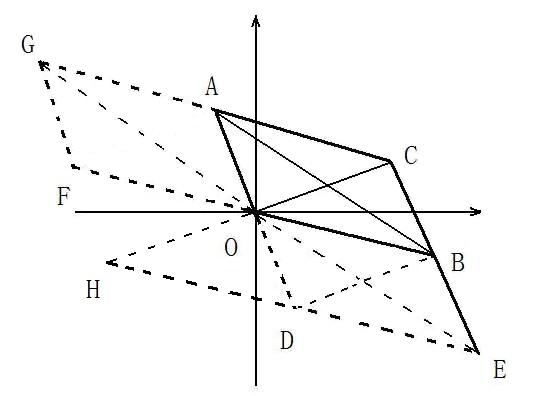}\\
  \caption{Computing the cycle $f^{-1}_{\#}(\xi_{0})-\xi_{0}$} \label{fig:cycle}
\end{figure}

Let $A=(-c,a), B=(d,-b), C=(d-c,a-b)$, and let $D,E,F,G,H$ be the points shown in Figure \ref{fig:cycle}. Then
\begin{align*}
f^{-1}_{\#}(\xi_{0})&=[OAC]-[OBC], \\
(fq)^{-1}_{\#}(\xi_{0})&=[OBE]-[ODE]=[ACB]-[AOB], \\
(fq^{-1})^{-1}_{\#}(\xi_{0})&=[OFG]-[OAG]=[BOA]-[BCA], \\
(ft)^{-1}_{\#}(\xi_{0})&=[OGA]-[OBA]=[BAC]-[OBA], \\
(ft^{-1}q)^{-1}_{\#}(\xi_{0})&=[OBD]-[OHD]=[ACO]-[COB].
\end{align*}
\begin{enumerate}
  \item[\rm(i)] Computing directly, one obtains
      \begin{align*}
      &f^{-1}_{\#}(\xi_{0})-(fq)^{-1}_{\#}(\xi_{0})=([OAC]-[OBC])-([ACB]-[AOB]) \\
      =\ &\partial([OBCB]+[OAOB]-[OACB]+[OOOB]+[OBBB]),
      \end{align*}
      hence
      \begin{align*}
      &\omega(\phi_{e,z};f^{-1}_{\#}(\xi_{0})-(fq)^{-1}_{\#}(\xi_{0})) \\
      =\ &\omega_{z}(-b,a,-a)\omega_{z}(a,-a,-b)\omega_{z}(a,-b,-a)^{-1} =\theta_{-b}^{\omega_{z}}(a,-a);
      \end{align*}
      by (\ref{eq:theta2}), (\ref{eq:theta-epsilon}) and the inductive hypothesis,
      \begin{align*}
      &\omega(\phi_{e,z};(fq)^{-1}_{\#}(\xi_{0})-\xi_{0})  \\
      =\ &\omega(\phi_{e,z};f^{-1}_{\#}(\xi_{0})-\xi_{0})\cdot\omega(\phi_{e,z};f^{-1}_{\#}(\xi_{0})-(fq)^{-1}_{\#}(\xi_{0}))^{-1}  \\
      =\ &\zeta^{-b\tilde{a}-a(\widetilde{-b})}\epsilon(a,-b)
      \cdot\zeta^{b(\tilde{a}+\widetilde{-a})}\epsilon(-b,a)\epsilon(-b,-a)  \\
      =\ &\zeta^{-a(\widetilde{-b})+b(\widetilde{-a})}\epsilon(-b,-a)=\kappa^{\omega}_{-b,a}(z),
      \end{align*}
      thus (\ref{eq:lem}) holds for $fq$.
  \item[\rm(ii)] One may check that
      \begin{align*}
      &(fq^{-1})^{-1}_{\#}(\xi_{0})-f^{-1}_{\#}(\xi_{0})=([BOA]-[BCA])-([OAC]-[OBC]) \\
      =\ &\partial([OACA]+[OBOA]-[OBCA]+[OOOA]+[OAAA]),
      \end{align*}
      then (\ref{eq:lem}) for $fq^{-1}$ is verified through
      \begin{align*}
      &\omega(\phi_{e,z};(fq^{-1})^{-1}_{\#}(\xi_{0})-f^{-1}_{\#}(\xi_{0}))  \\
      =\ &\omega_{z}(a,-b,b)\omega_{z}(-b,b,a)\omega_{z}(-b,a,b)^{-1}  =\theta_{a}^{\omega_{z}}(-b,b)
      \end{align*}
      so that
      \begin{align*}
      &\omega(\phi_{e,z};(fq^{-1})^{-1}_{\#}(\xi_{0})-\xi_{0})  \\
      =\ &\zeta^{-b\tilde{a}-a(\widetilde{-b})}\epsilon(a,-b)
      \cdot\zeta^{a(\widetilde{-b}+\tilde{b})}\epsilon(b,a)\epsilon(-b,a) \\
      =\ &\zeta^{a\tilde{b}-b\tilde{a}}\epsilon(b,a)=\kappa^{\omega}_{b,-a}(z).
      \end{align*}
  \item[\rm(iii)] Using $\partial[OBAC]=([BAC]-[OBA])-([OAC]-[OBC])$, we obtain
      \begin{align*}
      &\omega(\phi_{e,z};(ft)^{-1}_{\#}(\xi_{0})-\xi_{0}) \\
      =\ &\omega(\phi_{e,z};f^{-1}_{\#}(\xi_{0})-\xi_{0})\cdot\omega_{z}(-b,a+b,-b) \\
      =\ &\zeta^{-b\tilde{a}-a(\widetilde{-b})}\epsilon(a,-b)
         \cdot\zeta^{-b(\widetilde{a+b}+\widetilde{-b}-\widetilde{a})}\frac{\beta(a+b,-b)\beta(-b,a)}{\beta(a,-b)\beta(-b,a+b)} \\
      =\ &\zeta^{-b(\widetilde{a+b})-(a+b)(\widetilde{-b})}\epsilon(a+b,-b)=\kappa^{\omega}_{a+b,b}(z),
      \end{align*}
      verifying (\ref{eq:lem}) for $ft$.
  \item[\rm(iv)] Finally, since $[ACO]-[COB]=[ACB]-[AOB]-\partial[ACOB]$, we have
      \begin{align*}
      &\omega(\phi_{e,z};(ft^{-1}q)^{-1}_{\#}(\xi_{0})-\xi_{0})  \\
      =\ &\omega(\phi_{e,z};(fq)^{-1}_{\#}(\xi_{0})-\xi_{0})\cdot\omega_{z}(-b,b-a,-b)^{-1} \\
      =\ &\zeta^{-a(\widetilde{-b})+b(\widetilde{-a})}\epsilon(-b,-a)\cdot\zeta^{b(\widetilde{b-a}+\widetilde{-b}-\tilde{-a})}
          \frac{\beta(-a,-b)\beta(-b,b-a)}{\beta(b-a,-b)\beta(-b,-a)}   \\
      =\ &\zeta^{(b-a)(\widetilde{-b})+b(\widetilde{b-a})}\epsilon(-b,b-a)=\kappa^{\omega}_{-b,a-b}(z),
      \end{align*}
      hence (\ref{eq:lem}) is true for $ft^{-1}q$.
\end{enumerate}

\end{proof}

\begin{rmk}  \label{rmk:n-th}
\rm By the same method, one can show that for each integer $n$,
\begin{align}
\omega(\phi_{e,z^n};f^{-1}_{\#}(\xi_{0})-\xi_{0})=\kappa^{\omega}_{na,nb}(z).
\end{align}
\end{rmk}

\subsection{Formula for Seifert 3-manifolds with orientable bases}

Recall that (see \cite{Topof3mfd}), an orientable \emph{Seifert 3-manifold} can be obtained as follows. Take a
circle bundle $S^{1}\rightarrow X\rightarrow R$ where $X$ is an orientable  manifold and $R$ is a surface with $\partial R=\sqcup_{n}S^{1}$,
so that $\partial X=\sqcup_{n}\Sigma_{1}$; glue $n$ copies of solid torus ${\rm ST}:=D\times S^{1}$ onto $X$ along the boundary
tori via diffeomorphisms $f_{j}:\Sigma_{1}\rightarrow\Sigma_{1},j=1,\cdots,n$. The resulting manifold $M$ is a {\it Seifert 3-manifold}, and
the closed surface $R\cup(\sqcup_{n}D)$ is called the \emph{base}.

When $f_{j}=\left(\begin{array}{cc} a_{j} & b_{j} \\ c_{j} & d_{j} \end{array}\right)$ and $R$ is orientable of genus $g$, denote $M$ as
\begin{align}
M_{O}(g;(a_{1},b_{1}),\cdots,(a_{n},b_{n})).
\end{align}

\begin{figure} [h]
  \centering
  \includegraphics[width=0.4\textwidth]{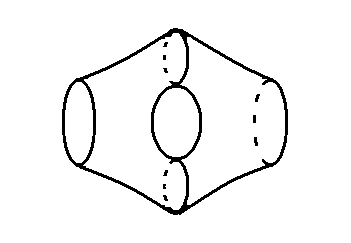}\\
  \caption{$\Sigma_{1;1,1}=-P\cup P$} \label{fig:P-P}
\end{figure}

Let $\Sigma_{g;n,1}$ denote the surface with genus $g$ and $n+1$ small disks removed, regarded as a cobordism from $\sqcup_{n}S^{1}$ to $S^{1}$.
As illustrated in Figure \ref{fig:P-P}, $\Sigma_{1;1,1}$ is the composite of $-P$ and $P$, so
\begin{align}
Z^{\omega}(\Sigma_{1;1,1}\times S^{1})={\rm mul}\circ {\rm com}:E\to E, \hspace{10mm} \psi_{\rho}\mapsto D_{\rho}^{2}\psi_{\rho}.
\end{align}

Since $Z^{\omega}({\rm ST})(x,h)=\delta_{x,e}$, we have
\begin{align}
Z^\omega({\rm ST})=
\sum\limits_{\rho\in\Lambda^\omega}(q^{-1}_{\ast}(Z^\omega({\rm ST})),\chi_{\rho})\cdot\psi_{\rho}= \sum\limits_{\rho\in\Lambda^\omega}\frac{1}{D_{\rho}}\psi_{\rho};
\end{align}
as a morphism $E\rightarrow \mathbb{C}$,
\begin{align}
Z^\omega(-{\rm ST})(\psi_{\rho})=(\psi_{\rho},Z^\omega({\rm ST}))=\frac{1}{D_{\rho}}.
\end{align}

For $f=\left(\begin{array}{ll} a & b \\ c & d \\ \end{array}\right)$, by (\ref{eq:action3}) and (\ref{eq:lem}), we have
\begin{align*}
(f_{\ast}(Z^{\omega}({\rm ST})))(x,h)
=\omega(\phi_{e,x^{c}h^{d}};f_{\#}^{-1}(\xi_{0})-\xi_{0})\cdot\delta_{x^{a}h^{b},e}
=\kappa^{\omega}_{a,b}(x^{c}h^{d})\cdot\delta_{x^{a}h^{b},e},
\end{align*}
hence
\begin{align*}
f_{\ast}(Z^{\omega}({\rm ST}))=\frac{1}{\#\Gamma}\cdot\sum\limits_{\rho\in\Lambda^{\omega}}
\left(\sum\limits_{xh=hx\atop x^{a}h^{b}=e}\kappa^{\omega}_{a,b}(x^{c}h^{d})\overline{\psi_{\rho}(x,h)}\right)\cdot\psi_{\rho}.
\end{align*}

\begin{figure} [h]
  \centering
  \includegraphics[width=0.4\textwidth]{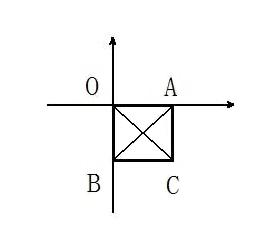}\\
  \caption{Computing $q^{-1}_{\#}(\xi_{0})-\xi_{0}$}  \label{fig:cycle2}
\end{figure}

Since, as Figure \ref{fig:cycle2} shows,
\begin{align*}
&q_{\#}^{-1}(\xi_{0})-\xi_{0}=([OAC]-[OBC])-([BOA]-[BCA]) \\
=\ &\partial([OBCA]-[OACA]-[OBOA]+[OOOA]+[OAAA]),
\end{align*}
we have
$$\omega(\phi_{h,x^{-1}};q^{-1}_{\#}(\xi_{0})-\xi_{0})
=\frac{\omega(x,h,x^{-1})}{\omega(h,x,x^{-1})\omega(x,x^{-1},h)}=\theta^{\omega}_{h}(x,x^{-1})^{-1},$$
hence
\begin{align}
\psi_{\rho}(x,h)=(q_{\ast}(\chi_{\rho}))(x,h)=\chi_{\rho}(h,x^{-1})\theta^{\omega}_{h}(x,x^{-1})^{-1}=\overline{\chi_{\rho}(h,x)},
\end{align}
using $\rho(x)\rho(x^{-1})=\theta^{\omega}_{h}(x,x^{-1})\rho(e)$.
Consequently,
\begin{align*}
\sum\limits_{xh=hx\atop x^{a}h^{b}=e}\kappa^{\omega}_{a,b}(x^{c}h^{d})\overline{\psi_{\rho}(x,h)}=
\sum\limits_{xh=hx\atop x^{a}h^{b}=e}\kappa^{\omega}_{a,b}(x^{c}h^{d})\chi_{\rho}(h,x)= \sum\limits_{z\in\Gamma}\kappa^{\omega}_{a,b}(z)\chi_{\rho}(z^{a},z^{-b}),
\end{align*}
where the last equality is obtained by an argument similarly as in the proof of Lemma 3.1 of \cite{Ch12};
denote
\begin{align}
\eta_{\rho}^{\omega}(a,b)=\sum\limits_{z\in\Gamma}\kappa^{\omega}_{a,b}(z)\chi_{\rho}(z^{a},z^{-b}),  \label{eq:eta}
\end{align}
then for $j=1,\ldots,n$,
\begin{align}
(f_{j})_{\ast}(Z^{\omega}({\rm ST}))
=\frac{1}{\#\Gamma}\cdot\sum\limits_{\rho\in\Lambda^{\omega}}\eta^{\omega}_{\rho}(a_{j},b_{j})\cdot\psi_{\rho}.
\end{align}

\begin{thm} \label{thm:main}
We have the following formula:
\begin{align*}
Z^{\omega}(M_{O}(g;(a_{1},b_{1}),\cdots,(a_{n},b_{n})))
=\frac{1}{(\#\Gamma)^{n}}\cdot\sum\limits_{\rho\in\Lambda^{\omega}}D_{\rho}^{n+2g-2}\prod\limits_{j=1}^{n}\eta^{\omega}_{\rho}(a_{j},b_{j}).
\end{align*}
\end{thm}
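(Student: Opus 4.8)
The plan is to realize the closed manifold $M_{O}(g;(a_{1},b_{1}),\cdots,(a_{n},b_{n}))$ as a gluing of elementary pieces whose TQFT invariants are already recorded, and then to assemble the number via the gluing and multiplicativity laws of Section 2.4. I regard the base-surface $\overline{R}$, a closed surface of genus $g$, as obtained from the cobordism surface $\Sigma_{g;n,1}$ (genus $g$, with $n$ incoming and $1$ outgoing boundary circles) by capping off all $n+1$ holes with disks. Crossing with $S^{1}$ then exhibits $M_{O}$ as
\[
M_{O}=(\Sigma_{g;n,1}\times S^{1})\cup_{f_{1},\cdots,f_{n}}(\sqcup_{n}\textrm{ST})\cup\textrm{ST},
\]
where the $n$ solid tori glued along the incoming tori via $f_{1},\cdots,f_{n}$ produce the exceptional fibers, while the last solid torus, glued trivially along the outgoing torus, fills in a regular-fiber disk. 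Since $Z^{\omega}(\Sigma_{g;n,1}\times S^{1})$ is the linear map $E^{\otimes n}\rightarrow E$ computed just above, and $(f_{j})_{\ast}(Z^{\omega}(\textrm{ST}))=\frac{1}{\#\Gamma}\sum_{\rho}\eta^{\omega}_{\rho}(a_{j},b_{j})\,S_{\ast}^{-1}\chi_{\rho}$, the gluing law contracts these $n$ vectors into the $n$ incoming slots.

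Carrying out the contraction, I expand $\bigotimes_{j=1}^{n}(f_{j})_{\ast}(Z^{\omega}(\textrm{ST}))$ and apply the linear map; the factor $\delta_{\rho_{1},\cdots,\rho_{n}}$ collapses the multiple sum to a single index $\rho$, leaving the vector
\[
\sum_{\rho}\Big(\prod_{j=1}^{n}\eta^{\omega}_{\rho}(a_{j},b_{j})\Big)\frac{(\#\Gamma)^{2g-1}}{(\dim\chi_{\rho})^{n+2g-1}}\,S_{\ast}^{-1}\chi_{\rho}\in E.
\]
To close the outgoing boundary I pair this vector against the trivially glued solid-torus state $Z^{\omega}(\textrm{ST})$, for which $Z^{\omega}(\textrm{ST})(x,h)=\delta_{x,e}$. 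Using $(S_{\ast}^{-1}\chi_{\rho})(x,h)=\overline{\chi_{\rho}(h,x)}$ together with the definition of $\dim\chi_{\rho}$, a one-line computation yields the counit value
\[
(S_{\ast}^{-1}\chi_{\rho},Z^{\omega}(\textrm{ST}))_{E}=\frac{1}{\#\Gamma}\sum_{h}\overline{\chi_{\rho}(h,e)}=\frac{\dim\chi_{\rho}}{\#\Gamma}.
\]
Substituting this turns the vector above into the scalar
\[
\sum_{\rho\in\Lambda^{\omega}}\frac{(\#\Gamma)^{2g-2}}{(\dim\chi_{\rho})^{n+2g-2}}\prod_{j=1}^{n}\eta^{\omega}_{\rho}(a_{j},b_{j}),
\]
which is exactly the claimed formula.

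Throughout I follow the scheme of Theorem 3.3 of \cite{chm1}; the only genuinely new feature of the twisted setting is the cocycle, whose effect has already been absorbed into the building blocks---namely into the $\kappa^{\omega}$-factors hidden in $(f_{j})_{\ast}(Z^{\omega}(\textrm{ST}))$ and into the projective characters $\chi_{\rho}$ indexing the orthonormal basis. The step deserving the most care, and the main obstacle, is justifying the geometric decomposition together with the orientation bookkeeping in the gluing law: one must verify that the $n+1$ solid tori are attached with the correct framings, that closing the final torus realizes precisely the inner product with $Z^{\omega}(\textrm{ST})$ (with the cap occupying the conjugate slot, so that the $\eta^{\omega}_{\rho}$-factors are not conjugated), and that multiplicativity renders the result independent of the order in which the boundaries are filled. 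Once these identifications are pinned down, everything else reduces to the algebraic manipulations above.
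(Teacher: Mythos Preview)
Your proposal is correct and follows essentially the same route as the paper: the paper likewise uses $\Sigma_{g;n,1}\times S^{1}$ as a cobordism $E^{\otimes n}\to E$, feeds in the already-computed vectors $(f_{j})_{\ast}(Z^{\omega}(\mathrm{ST}))$, and then closes off the remaining torus, referring to Theorem~3.3 of \cite{chm1} for the assembly. Your explicit evaluation of the counit $(S_{\ast}^{-1}\chi_{\rho},Z^{\omega}(\mathrm{ST}))_{E}=\dim\chi_{\rho}/\#\Gamma$ and your flagging of the orientation bookkeeping as the delicate point are exactly what the paper leaves to that reference.
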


\begin{proof}
Since $\Sigma_{g;1,1}$ can be obtained by gluing $g$ $\Sigma_{1;1,1}$'s successively, we have
\begin{align}
Z^\omega(\Sigma_{g;1,1}\times S^{1})=({\rm mul}\circ {\rm com})^{g}:E\rightarrow E,\qquad \psi_{\rho}\mapsto(D_{\rho})^{2g}\psi_{\rho}.
\end{align}
For $p>0$, $\Sigma_{0;p,1}$ can be obtained by gluing $(p-1)$ $P$'s successively, hence
$Z^\omega(\Sigma_{0;p,1}\times S^{1}):E^{\otimes p}\rightarrow E$ is equal to
\begin{align*}
{\rm mul} &\circ({\rm id}\otimes {\rm mul})\circ\cdots\circ({\rm id}\otimes\cdots\otimes {\rm id}\otimes {\rm mul}),\nonumber\\
&\psi_{\rho_{1}}\otimes\cdots\otimes\psi_{\rho_{p}}\mapsto(D_{\rho_{1}})^{p-1}\delta_{\rho_{1},\cdots,\rho_{p}}\cdot\psi_{\rho_{1}}.
\end{align*}
Dually, for $q>0$, $Z^\omega(\Sigma_{0;1,q}\times S^{1}):E\rightarrow E^{\otimes q}$ is equal to
\begin{align*}
({\rm id}\otimes\cdots\otimes {\rm id}\otimes {\rm com})\circ\cdots\otimes({\rm id}\otimes {\rm com})\circ {\rm com}, \qquad
\psi_{\rho}\mapsto(D_{\rho})^{q-1}\psi_{\rho}^{\otimes q}.
\end{align*}
So $Z^\omega(\Sigma_{g;p,q}\times S^{1}):E^{\otimes p}\rightarrow E^{\otimes q}$ is equal to the composite
\begin{align*}
E^{\otimes p}\xrightarrow[]{Z^\omega(\Sigma_{0;p,1}\times S^{1})}E&\xrightarrow[]{Z^\omega(\Sigma_{g;1,1}\times S^{1})}E\xrightarrow[]
{Z^\omega(\Sigma_{0;1,q}\times S^{1})}E^{\otimes q}, \\
\psi_{\rho_{1}}\otimes \cdots\otimes\psi_{\rho_{p}}&\mapsto (D_{\rho_{1}})^{p+q+2g-2}
\delta_{\rho_{1},\cdots,\rho_{p}}\cdot\psi_{\rho_{1}}^{\otimes q}.
\end{align*}

Let $M'$ be $M_{O}(g;(a_{1},b_{1}),\ldots,(a_{n},b_{n}))$
with an ${\rm ST}$ deleted. It is obtained by gluing $n$ ${\rm ST}$'s onto $\Sigma_{g;n,1}\times S^{1}$, using
$f_{1},\ldots,f_n$.
Then $Z^\omega(M')$ is the composite
\begin{align}
\mathbb{C}\cong &\mathbb{C}^{\otimes n}\xrightarrow[]{\bigotimes\limits_{j=1}^{n}((f_{j})_{\ast}Z^\omega({\rm ST}))}E^{\otimes n}\xrightarrow[]{Z^\omega(\Sigma_{g;n,1}\times S^{1})}E. \nonumber \\
1&\mapsto\sum\limits_{\rho\in\Lambda^\omega}\left(\frac{(D_{\rho})^{n+2g-1}}{(\#\Gamma)^{n}}\cdot\prod\limits_{j=1}^{n}
\eta^\omega_{\rho}(a_{j},b_{j})\right)\psi_{\rho}. \label{eq:Z(M')}
\end{align}

The theorem follows from that $Z^\omega(M_{O}(g;(a_{1},b_{1}),\cdots,(a_{n},b_{n})))$ is the composite
$\mathbb{C}\xrightarrow[]{Z^\omega(M')} E\xrightarrow[]{Z^\omega(-{\rm ST})}\mathbb{C}.$
\end{proof}

\begin{exmp}
\rm
Let $\Gamma=\mathbb{Z}_{m}$ and $\omega=\mu_{\ell}$.

For each $h\in\mathcal{C}(\mathbb{Z}_{m})=\mathbb{Z}_m$, we have $N_{h}=\mathbb{Z}_{m}$.
It is easy to see that there is a bijective correspondence between $\theta^{\ell}_{h}$-projective representations $\rho$ and ordinary representations $\tilde{\rho}$, through
$$\tilde{\rho}(x)=\zeta_{m^2}^{-\ell\tilde{h}\tilde{x}}\cdot\rho(x),$$
hence each irreducible $\theta^{\ell}_{h}$-projective representation of $\mathbb{Z}_{m}$ is given by
\begin{align}
\rho^{\ell}_{h,s}(x)=\zeta_{m^2}^{\ell\tilde{h}\tilde{x}+m(\widetilde{sx})}
\end{align}
for a unique $s\in\mathbb{Z}_{m}$.
Computing directly or referring to \cite{ACM04} Proposition 8,
\begin{align}
\chi_{\rho^{\ell}_{h,s}}(x,y)=\delta_{h,x}\cdot\zeta_{m^{2}}^{\tilde{\ell}\tilde{h}\tilde{y}+m\widetilde{sy}}.   \label{eq:chi-cyclic}
\end{align}

By Remark \ref{rmk:n-th}, for each $u\in\mathbb{Z}_m$,
$$\kappa^{\mu_{\ell}}(a,b)(u)=\kappa^{\mu_\ell}(\tilde{u}a,\tilde{u}b)(1)=
\zeta_{m^2}^{\tilde{\ell}(-b\tilde{u}(\widetilde{au})-a\tilde{u}(\widetilde{-bu}))}.$$
By careful computation, using (\ref{eq:eta}) and (\ref{eq:chi-cyclic}),
\begin{align}
\eta_{h,s}^{\mu_{\ell}}(a,b)=\sum\limits_{u:au=h}\zeta_{m^{2}}^{\tilde{\ell}ab\tilde{u}^{2}-(2\tilde{\ell}\tilde{h}+m\tilde{s})b\tilde{u}}.
\end{align}

For $j\in\{1,\cdots,n\}$, let
$d_{j}=(a_{j},m)$, $a'_{j}=a_{j}/d_{j}$, $m_{j}=m/d_{j}$.
Let $c_{j}$ be the unique integer with
$0\leqslant c_{j}<m_{j}$ and $a'_{j}c_{j}\equiv 1\pmod{m_{j}}$; suppose $a'_{j}c_{j}=1+t_{j}m_{j}$.
Let $d=[d_{1},\cdots,d_{n}]$, and let
$\tilde{h}'=\tilde{h}/d$, $\tilde{h}_{j}=\tilde{h}/d_{j}=\tilde{h}'d/d_{j}$.

We have
\begin{align*}
\eta_{h,s}^{\mu_{\ell}}(a_{j},b_{j})&=\ \sum\limits_{k=0}^{d_{j}-1}
\zeta_{m^{2}}^{b_{j}[\tilde{\ell}a_{j}(km_{j}+c_{j}\tilde{h}_{j})^{2}-(2\tilde{\ell}\tilde{h}_{j}d_{j}+m\tilde{s})(km_{j}+c_{j}\tilde{h}_{j})]} \nonumber \\
&=\ \zeta_{mm_{j}}^{-\tilde{\ell}c_{j}\tilde{h}_{j}^{2}}\cdot\zeta_{m}^{(\tilde{\ell}t_{j}-\tilde{s})c_{j}\tilde{h}_{j}}\cdot\sum\limits_{k=0}^{d_{j}-1}
\zeta_{d_{j}}^{\tilde{\ell}a'_{j}k^{2}+(2\tilde{h}_{j}t_{j}-\tilde{s})k}. \label{eq:sum}
\end{align*}

For a prime number $p$, let $(a/p)$ denote the Legendre symbol (see Page 51 of \cite{GTM84}), and let
$g_{1}(\chi)=\sum\limits_{k=1}^{p-1}\chi(k)\zeta_{p}^{k}$ be the Gauss sum (see Page 91 of \cite{GTM84}) with $\chi$
the unique character of order 2.
Put
\begin{align*}
S_{p}(a)=\sum\limits_{k=0}^{p-1}\zeta_{p}^{ak^{2}}=
\left\{\begin{array}{ll}  \frac{1}{2}(1+(a/p))g_{1}(\chi), & p\nmid a, \\  p, &p\mid a. \end{array}\right.
\end{align*}

The $\eta_{h,s}^{\mu_{\ell}}(a_j,b_j)$'s, and also $Z^{\mu_\ell}(M_{O}(g;(a_{1},b_{1}),\cdots,(a_{n},b_{n})))$, can be evaluated using $S_{p}(a)$ for $p\mid m$. In general the expression is very complicated.
\end{exmp}

\end{document}